
 \documentclass[preprint,11pt]{elsarticle}


\usepackage{amsmath}
\usepackage{amssymb}
\usepackage{amsthm}




\newtheorem{Th}{Theorem}[section]

\newtheorem{Lemma}[Th]{Lemma}
\newtheorem{Problem}[Th]{Problem}
\newtheorem*{A}{Casas-Alvero Conjecture}
\theoremstyle{remark}
\newtheorem*{sol}{Solution}

\newcommand{\dst}{\displaystyle}
\begin{document}

\journal{J. Approx. Theory}

\begin{frontmatter}



\title{Three proofs of the Casas-Alvero conjecture}

\author[poli]{Luis J. Fern\'andez de las Heras\corref{autor}}
\ead{lfernandez@etsii.upm.es}
\author[acu]{Mar\'ia J. Fern\'andez de las Heras}
\ead{mjose@caend.upm-csic.es}

 \cortext[autor]{Corresponding author.
Fax: +34 91 336 3001. Phone: +34 91 336 3106}

\address[poli]{Dpto. de Matem\'atica Aplicada,
E.T.S.  de Ingenieros Industriales, Universidad Polit\'ecnica de
Madrid, Jos\'e G. Abascal 2, 28006 Madrid, Spain.}

\address[acu]{CSIC,
      Centro de Ac\'ustica Aplicada y Evaluaci\'on no Destructiva,
      Serrano 144,
  28006 Madrid, Spain.}

\begin{abstract}
The Casas-Alvero conjecture claims that a complex univariate
polynomial having roots in common  with each of its derivatives must
be a power of a linear polynomial. Up to now, only partial proofs
and numerical evidences have been presented. In this paper we give
three different proofs of the conjecture.
\end{abstract}

\begin{keyword}
Polynomial interpolation \sep univariate polynomials. \MSC[2010]
41A05 \sep 30E05
\end{keyword}
\end{frontmatter}

\section{Introduction}

This paper is concerned with the following question posed by E.
Casas-Alvero more than a decade ago.
\begin{A}
 Let  $f$ be a monic complex polynomial of degree n in a
single variable $z$. Suppose that $\gcd(f, f^{(k)} )\not= 1$ for $k
= 1, \dots , n-1$, where $f^{(k)}$ denotes the $k$-th derivative of
$f$. Then, there exists a constant $a\in\mathbb{C}$ such that $f(z)
= (z-a)^n$.
\end{A}
It may be proven that if the conjecture is true over $\mathbb{C}$
then it is true over all fields of characteristic $0$. In contrast,
the conjecture is not true in prime characteristic.

In \cite {2} the Casas-Alvero conjecture was proven for polynomials
of degree $n$ less than or equal to $7$. The conjecture has also
been proven for infinitely many values of $n$, see \cite{1}.

We can rewrite the conjecture in terms of interpolation polynomials
on the complex plane $\mathbb{C}$, see \cite{3,4}.
\begin{Th}
\label{pint} Let $z_1 , z_2 , \dots, z_n $ be $n$ complex numbers
and let $p$ be a monic complex polynomial of degree $n$. Suppose
that the polynomial $p$ satisfies
$$\begin{array}{rcl}
p (z_k) &=& 0,\qquad               k = 1, \dots, n;\\
p^{(k)} (z_{k+1}) &=& 0,\qquad      k = 1, \dots, n-1.
\end{array}
$$
Then
\[
p (z) = (z-a)^n,\qquad a\in\mathbb{C}.
\]
\end{Th}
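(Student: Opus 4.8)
The plan is to treat $z_1,\dots,z_n$ as the zeros of $p$ counted with multiplicity, so that $p(z)=\prod_{k=1}^{n}(z-z_k)$ and the hypothesis reads: for each $k=1,\dots,n-1$ the zero $z_{k+1}$ of $p$ is simultaneously a zero of $p^{(k)}$. I would first record that the whole set of conditions is invariant under affine substitutions $z\mapsto\alpha z+\beta$, since replacing $p$ by $\alpha^{-n}p(\alpha z+\beta)$ keeps it monic, rescales $p^{(k)}$ by the nonzero factor $\alpha^{k-n}$, and sends the nodes to $(z_j-\beta)/\alpha$. Using this freedom I would place the center of mass of the zeros at the origin. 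Two exact facts then form the base of the argument. Since $p^{(n-1)}$ is linear and vanishes only at the centroid, the condition $p^{(n-1)}(z_n)=0$ forces $z_n$ to equal the centroid, hence $z_n=0$ after normalization; and the case $k=1$ forces $p$ and $p'$ to share the root $z_2$, so every admissible $p$ already carries a repeated zero.

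The engine I would develop to turn derivative conditions into multiplicity is a sharpened Gauss--Lucas principle: if $\zeta$ is an extreme point of the convex hull $H$ of the zeros of $p$ and $p^{(k)}(\zeta)=0$, then $\zeta$ is a zero of $p$ of multiplicity at least $k+1$. This follows by induction on $k$ from the logarithmic-derivative identity $p'/p=\sum_i m_i/(z-w_i)$: at a vertex of $H$ every summand has a strictly positive component along the outward normal, so $p'$ cannot vanish at a vertex unless that vertex is itself a zero of $p$ of multiplicity $\ge 2$, and the inclusions $H(p)\supseteq H(p')\supseteq\cdots\supseteq H(p^{(k)})$ let one iterate the estimate. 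With this in hand, whenever a shared root $z_{k+1}$ happens to be a vertex of $H$ its multiplicity in $p$ is at least $k+1$, and I would try to run an induction that peels such extreme zeros off, lowering the degree and reducing to a smaller instance of the theorem.

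The hard part is precisely the shared roots that sit in the \emph{interior} of $H$, where the sharpened lemma gives nothing --- and the deepest condition $k=n-1$ is of exactly this type, since it pins $z_n$ to the centroid, which lies in the interior as soon as the zeros are not all equal. Over $\mathbb{R}$ such interior coincidences are easy to exclude, but over $\mathbb{C}$ the symmetric-function constraints produced by the interior conditions are satisfied by spurious highly symmetric configurations (roots-of-unity type patterns with vanishing low power sums), so no single elementary-symmetric or power-sum identity can close the argument. I therefore expect the decisive step to be a global one: either showing that the elimination ideal obtained by removing the coefficients of $p$ from the $2n-1$ linear conditions cuts out only the diagonal $z_1=\cdots=z_n$, or exhibiting a Wronskian / confluent-Vandermonde determinant attached to the Birkhoff interpolation data that is nonzero off the diagonal. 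Making one of these global non-degeneracy statements effective uniformly in $n$ is the crux on which the whole argument --- and the genuine difficulty of the conjecture --- rests.
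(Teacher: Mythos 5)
Your proposal is not a proof, and you say so yourself: the case of a shared root lying in the \emph{interior} of the convex hull of the zeros is left entirely open, and that case is unavoidable, since the condition $p^{(n-1)}(z_n)=0$ pins $z_n$ to the centroid, which is an interior point as soon as the roots are not all equal. What you do establish is correct and worth keeping: the affine normalization, the identification of $z_n$ with the centroid (this is exactly Lemma \ref{lema} of the paper), and the sharpened Gauss--Lucas statement that a vertex $\zeta$ of the hull with $p^{(k)}(\zeta)=0$ must be a root of $p$ of multiplicity at least $k+1$ (the one-step lemma plus the Gauss--Lucas inclusion of the hull of the zeros of $p^{(j+1)}$ in that of $p^{(j)}$ does give the iteration, because a point of a convex subset that is extreme in the larger set is extreme in the subset). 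But these facts only dispose of extreme shared roots, and the ``decisive step'' you defer to --- that the elimination ideal cuts out only the diagonal $z_1=\cdots=z_n$, or that some confluent Vandermonde or Wronskian determinant attached to the Birkhoff data is nonzero off the diagonal --- is precisely the content of the conjecture, not a step you have supplied.

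For a fair comparison you should also know that the paper's three arguments do not supply that step either; each contains a concrete unjustified inference. In the first proof, the permutation-invariance of the \emph{set} of solution vectors $\alpha$ of Problem \ref{p2} is confused with each individual solution being fixed by every permutation; invariance of the set does not imply that $p^{(k)}$ vanishes at all of $z_1,\dots,z_n$ for every $k$. In the second proof, the polynomial called $p_{n-1}$ inside the iterated integral is merely the solution of the Birkhoff problem with nodes $\alpha_2,\dots,\alpha_n$; nothing shows that its roots coincide with its interpolation nodes, so the induction hypothesis ``every Casas-Alvero polynomial of degree $n-1$ equals $(z-\alpha_1)^{n-1}$'' cannot be applied to it. In the third proof, the observation that $p_n^{(k)}$ has the same normalized coefficients $c_j$, $j=1,\dots,n-k$, as $p_n$ is an identity valid for \emph{every} polynomial and carries no information about which roots are shared, so the conclusion that all zeros of all derivatives are zeros of $p_n$ does not follow. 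In short, your localization of the difficulty at the interior, global non-degeneracy step is accurate, and neither your sketch nor the paper closes it; for general $n$ the statement remains a conjecture.
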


In the sequel any monic polynomial satisfying the conditions of
Theorem \ref{pint} will be called a Casas-Alvero polynomial.

\section{Birkhoff interpolation}
First, we pose the following problems which may be considered as
particular cases of Birkhoff  or lacunary  interpolation problems
\cite{5}. They are closely related to the Casas-Alvero conjecture.

\begin{Problem}
\label{p1} Given $n$ complex numbers $ \alpha_1,\dots \alpha_n$,
find all monic polynomials $p$ that satisfy
\[
p^{(k)} (\alpha_{k+1}) = 0,\qquad k = 0, 1,\dots, n-1.
\]
\end{Problem}

\begin{sol}
A monic polynomial $p_n$ of degree $n$ can be expressed in the form
\begin{equation}\label{monico}
p_n (z) = \sum_{j=0}^n a_j z^j,
\end{equation}
where $a_n = 1$. For $k=1,\dots,n$, its $k$-th derivative is given
by
\[
p_n^{(k)}(z)=\sum_{j=k}^n a_j \frac{j!}{(j-k)!}z^{j-k}.
\]
If we evaluate the above expressions at the nodes, we obtain
\[
p_n^{(k)} (\alpha_{k+1}) = \sum_{j=k}^n a_j
\frac{j!}{(j-k)!}\,\alpha_{k+1}^{j-k}=0,\quad k=0,1,\dots,n-1.
\]
We can write the above equations in matrix form as
$$
A\cdot \left(\begin{array}{c}
a_0\\ a_1\\ \vdots\\a_{n-1}\\a_n\\
\end{array}\right)=\left(\begin{array}{c}
0\\0\\ \vdots\\0\\n!\\
\end{array}\right),
$$
where

\begin{eqnarray*}
A=\left(\begin{array}{ccccccc} 1 & \alpha_1 & \alpha_1^2
&\cdots&\alpha_1^{n-2}&
\alpha_1^{n-1}&\alpha_1^n\\
0& 1 & 2\alpha_2 &\cdots&
\frac{n-2}{1}\,\alpha_2^{n-3}&\frac{n-1}{1}\,\alpha_2^{n-2}&n \alpha_2^{n-1}\\
0 & 0 &2! &\cdots&
\frac{(n-2)!}{(n-4)!}\,\alpha_3^{n-4}&\frac{(n-1)!}{(n-3)!}\,
\alpha_3^{n-3}& \frac{n!}{(n-2)!}\,\alpha_3^{n-2}\\
\vdots &\vdots &\vdots &\cdots&\vdots&\vdots&\vdots \\
0 &0 &0
&\cdots&(n-2)!&\frac{(n-1)!}{1}\,\alpha_{n-1}&\frac{n!}{2!}\,
\alpha_{n-1}^2 \\
0 &0 &0 &\cdots&0&(n-1)! &n!\alpha_n\\
0 &0 &0 &\cdots&0&0 &n!\\\
\end{array}\right).
\end{eqnarray*}

Clearly the interpolation problem has a solution and it is unique,
because the matrix is of full rank. The solution of the system is
trivial but laborious. The incidence matrix will have as many rows
as the number of different nodes. In the extreme case that there is
only one node, the problem is of Hermite type. As in all cases the
matrix is triangular, there is always a unique solution.

A straightforward calculation shows that the unique solution $p_n$
of Problem \ref{p1} admits the following integral representation
\begin{eqnarray}\label{int}
p_n (z)= n! \int_{\alpha_1}^z dx_1 \int_{\alpha_2}^{x_1} dx_2
 \dots\int_{\alpha_{n-1}}^{x_{n-2}}
dx_{n-1} \int_{\alpha_n}^{x_{n-1}} dx_n.
\end{eqnarray}
\end{sol}

We can study now the inverse problem.

\begin{Problem}
\label{p2} Given $n$ complex numbers $z_1 , z_2 , \dots , z_n$,
 let
\[
\hat{z}=(z_1,\dots,z_n)\quad \mbox{and}\quad p(z)=\prod_{k=1}^{n}
(z-z_{k}).
\]
Find all the vectors of complex numbers
\[
\alpha =(\alpha_1 , \alpha_2 , ... ,\alpha_n )
\]
that satisfy
\[
p^{(k)} (\alpha_{k+1}) = 0,\qquad k = 0, 1, ..., n-1.
\]
\end{Problem}

\begin{sol}
The problem has a solution. We begin with the last equation of the
system given by Problem \ref{p1} to obtain $\alpha_n$. Then we solve
the $(n-1)$-th equation to get the two values of $\alpha_{n-1}$ and
we continue until considering the first equation of degree $n$ which
allows us to obtain the $n$ values of $\alpha_1$.

The problem has a solution but the solution is not unique. The
problem may have until $n!$ different solutions. Of course, it is
possible to approximate the solutions by numerical methods, but it
should be noticed that this problem is invariant under permutation
of the symmetric group of the $n$ components of $\hat{z}$.
 If we change the order of the roots in $\hat{z}$, the polynomial $p$
 does not change and the solutions of the problem are the same ones as before. The
number of solutions is just equal to the number of permutations of
$\hat{z}$.

We can make Problem \ref{p2} more complicated considering additional
 conditions like requiring the
nodes of interpolation  $\alpha$ to be  equal to the zeros of the
polynomial $p$. Thus, the polynomial $p$ and its derivatives will
have common zeros.
\end{sol}

\section{The first proof}
 There are several different ways to tackle the
proof. We have chosen to consider it as a problem of the same type
as that of Problem \ref{p2}. That is, assuming that the roots of the
polynomial are known, find the nodes of interpolation. Subsequently,
impose the condition that the roots coincide with the nodes. Then
this problem is invariant under permutations of the zeros of the
polynomial as in Problem \ref{p2}. But if we permute the zeros of
the polynomial, the interpolation nodes are automatically
interchanged. Thus the polynomial and its derivatives up to order
$n-1$ must take the value zero at all the zeros of the polynomial.
Therefore, all the zeros are of multiplicity $n$.
 But this is possible only if there is a single zero of multiplicity $n$.
 Thus, the Casas-Alvero conjecture is proven.

\section{A second proof by induction}
 Taking account of the representation \eqref{int},
for $n=2$, the Casas-Alvero polynomial is
$$
\begin{array}{rcl}
p_2 (z)&=&\dst 2! \int_{\alpha_1}^z dx_1 \int_{\alpha_2}^{x_1} dx_2
= 2! \int_{\alpha_2}^z dx_1 \int_{\alpha_2}^{x_1} dx_2\\ \\&=&\dst (
z-\alpha_2)^2-(\alpha_1-\alpha_2)^2=( z-\alpha_2)^2=( z-\alpha_1)^2.
\end{array}
$$
Now suppose that, for $k=1,\dots,n-1$, any Casas-Alvero polynomial
is of the form
\begin{eqnarray*}
p_k (z)=( z-\alpha_1)^k.
\end{eqnarray*}
Then
\begin{eqnarray*}
p_{n} (z)=n! \int_{\alpha_1}^z dx_1 \int_{\alpha_2}^{x_1} dx_2
\cdots \int_{\alpha_{n-1}}^{x_{n-2}} dx_{n-1}
\int_{\alpha_n}^{x_{n-1}} dx_n.
\end{eqnarray*}
Notice that $p_n (z)$ takes the value zero at $z=\alpha_k$ from
$k=1$ to $n$. So, for $k=1,\dots,n$, we have
\begin{eqnarray*}
p_n (z)=n! \int_{\alpha_k}^z dx_1 \int_{\alpha_2}^{x_1} dx_2
\cdots\int_{\alpha_{n-1}}^{x_{n-2}} dx_{n-1}
\int_{\alpha_n}^{x_{n-1}} dx_n.
\end{eqnarray*}
Therefore
\begin{eqnarray*}
p_n (z)&=&n! \int_{\alpha_k}^z dx_1 \int_{\alpha_2}^{x_1} dx_2 \cdots\int_{\alpha_{n-1}}^{x_{n-2}} (x_{n-1}-{\alpha_n}) dx_{n-1}\\
&=&n! \int_{\alpha_k}^z dx_1 \int_{\alpha_2}^{x_1} dx_2
\cdots\int_{\alpha_{n-1}}^{x_{n-2}} x_{n-1} dx_{n-1}-n\alpha_n
p_{n-1}(z),
\end{eqnarray*}
for $k=1,\dots,n$, where  $p_{n-1}$ is a Casas-Alvero polynomial of
degree $n-1$. Then
 \begin{eqnarray*}  p_{n-1}(z) =( z-\alpha_1)^{n-1}
\end{eqnarray*}
and
$$
\alpha_1=\alpha_2=\dots=\alpha_{n-1}.
$$
Therefore
\begin{eqnarray*}
p_n(z)&=&n! \int_{\alpha_1}^z dx_1 \int_{\alpha_1}^{x_1} dx_2 \cdots
\int_{\alpha_1}^{x_{n-3}}\frac{1}{2} (x_{n-2}^2
-{\alpha_1^2})\,dx_{n-2}-n\alpha_n p_{n-1}(z)\\
&=&n!\int_{\alpha_1}^z dx_1 \int_{\alpha_1}^{x_1} dx_2
\cdots\int_{\alpha_1}^{x_{n-3}}\frac{1}{2!}
x_{n-2}^2\,dx_{n-2}\\
&&-\frac{n(n-1)}{2}\alpha_1^2 p_{n-2}(z)-n\alpha_n p_{n-1}(z),
\end{eqnarray*}
where $  p_{n-2}(z) =( z-\alpha_1)^{n-2}$.

If we continue the process of calculating the iterated integral, we
arrived at
\begin{eqnarray*}
p_n (z)&=&z^n-\alpha_1^n-\sum_{k=1}^{n-2} \left(\begin{array}{c}n\\k\\
\end{array}\right)\alpha_1^{n-k} (z-\alpha_1)^k-n\alpha_n p_{n-1}(z)\\
&=&z^n-\sum_{k=0}^{n-1} \left(\begin{array}{c}n\\k\\
 \end{array}\right)
\alpha_1^{n-k} (z-\alpha_1)^k+n(\alpha_1-\alpha_n)(z-\alpha_1)^{n-1}\\
&=&z^n+(z-\alpha_1)^n-\sum_{k=0}^n \left(\begin{array}{c}n\\k\\
 \end{array}\right)
\alpha_1^{n-k} (z-\alpha_1)^k\\ && +n(\alpha_1-\alpha_n)(z-\alpha_1)^{n-1}\\
&=&(z-\alpha_1)^n+n(\alpha_1-\alpha_n)(z-\alpha_1)^{n-1},
\end{eqnarray*}
since the number $\alpha_n$ is a root of the polynomial $p_n$.
Actually
\begin{eqnarray*}
p_n (\alpha_n)&=&(\alpha_n-\alpha_1)^n+n(\alpha_1-\alpha_n) (\alpha_n-\alpha_1)^{n-1}\\
&=&-(n-1)(\alpha_n-\alpha_1)^n=0.
\end{eqnarray*}
Then, $\alpha_n=\alpha_1$ and $p_n(z)=(z-\alpha_1)^n$, as we wanted
to prove.

\section{The third proof}

Although we think that the previous two proofs are correct, we
encourage the reader to continue reading the article.

\begin{Lemma}
\label{lema} Every Casas-Alvero  polynomial has a root equal to the
geometric center of gravity of its roots.
\end{Lemma}
\begin{proof} Let $p_n$  be a monic polynomial of degree $n$
expressed in the form \eqref{monico}. Then
$$
p_n^{(n-1)} (z)= n!\,z+(n-1)!\,a_{n-1}.
$$
Evaluating at the zero $z_n$ of the polynomial $p_n^{(n-1)}$, we
obtain
$$ p_n^{(n-1)} (z_n)= n!z_n+(n-1)!a_{n-1}=0,
$$
or, equivalently,
$$
z_n =-\frac{a_{n-1}}{n} =\frac{1}{n}\sum_{j=1}^{n}z_j.
$$
\end{proof}

Now, let $p_n$  be a monic polynomial of degree $n$. The polynomial
$p_n$ may be written in the form
\begin{eqnarray}\label{e1}
p_n (z)=\sum_{k=0}^{n}\left(\begin{array}{c}n\\k\\
\end{array}\right) (-1)^k c_k z^{n-k},
\end{eqnarray}
where
\begin{eqnarray*}
 c_0 &=&  1,\qquad c_1 ={\left(\begin{array}{c}n\\1\\
\end{array}\right)}^{-1} \sum_{j=1}^n  z_j, \\ \dst
 c_k&=&  {\left(\begin{array}{c}n\\k\\
\end{array}\right)}^{-1} \sum_{ j_1+j_2+...+j_n=k}^{j_i=0,1}
\left( \prod_{i=1} ^n  z_i^{j_i}\right),\quad k=2,\dots, n.
\end{eqnarray*}

Taking derivatives in \eqref{e1}, we obtain
\begin{eqnarray*}
 p'_n (z)&=&\sum_{k=0}^{n-1}\left(\begin{array}{c}n\\k\\
\end{array}\right) (-1)^k c_k (n-k) z^{n-k-1}\\
&=&n\sum_{k=0}^{n-1}\left(\begin{array}{c}n\\k\\
\end{array}\right) (-1)^k c_k \frac{n-k}{n} z^{n-k-1}\\
&=&n\sum_{k=0}^{n-1}\left(\begin{array}{c}{n-1}\\k\\
\end{array}\right) (-1)^k c_k  z^{n-k-1}.
\end{eqnarray*}

The above calculation indicates that the zeros of the derivative of
a polynomial of degree $n$ and the zeros of this polynomial have in
common the following quantities: the average of their zeros, the
mean double product of their zeros, and so forth until the average
$(n -1)$ product of their zeros.

If $p_n$ is a Casas-Alvero polynomial, then, for $k=1,\dots,n-1$,
the polynomial $p_n^{(k)}$ has the same coefficients $c_j,\,
j=1,\dots,n-k,$ as $p_n$.

For $k = n-1$, the zero shared by the polynomial and its derivative
of order $n-1$ is precisely $c_1$ and this determines the
coefficient of degree $n-1$ of the polynomial $p_n$.

For $k = n-2$, which is the zero shared by the polynomial and the
 derivative of order $n-2$? The two zeros determine the same value for $c_2$.

Therefore, whatever zero $p_n$ shares with its derivative we obtain
same result for the polynomial. But this reasoning is valid for all
the derivatives of $p_n$. Then all the zeros of the successive
derivatives of $p_n$ must be zeros of $p_n$, therefore all the zeros
must be equal.

\section{Concluding remarks}

We conclude that the Casas-Alvero conjecture is true. In connection
with the work carried out in this paper, we are currently studying
an optimization problem in which symmetries play an important role.
The results related to this problem will appear elsewhere.

In our opinion solving a math problem is not to close the door but
rather open a window to new challenges. For this reason, we propose
a degenerated Birkhoff interpolation problem in which the number of
equations is greater than the number of unknowns. Among the
different possibilities we have chosen the problem stated below.
Find the conditions to be met by the interpolation nodes and
 their values for the following problem to be solvable.
\begin{Problem} Given $2n$ complex numbers $
\alpha_1,\dots,\alpha_n $ and $ c_0,\dots,c_{n-1} $, find all the
polynomials $p$ of degree $n$ that satisfy
\[
p (\alpha_{k+1})=p^{(k)} (\alpha_{k+1}) = c_k, \qquad k = 0, 1,
\dots, n-1.
\]
\end{Problem}


\end{document}